\newtheorem{theorem}{Theorem}[section]
\newtheorem{proposition}[theorem]{Proposition}
\newtheorem{definition}[theorem]{Definition}
\newtheorem{example}[theorem]{Example}
\newtheorem{remark}[theorem]{Remark}
\numberwithin{equation}{section}
\newcommand\cM{{\mathcal M}}
\def\NN{\mathbb  N}
\newcommand\RR{\mathbb{{R}}}
\newcommand\CC{\mathbb{{C}}}
\newcommand\ZZ{\mathbb{{Z}}}
\newcommand\ra{{\rm a}}
\newcommand\ba{{\bf a}}
\newcommand\bc{{\bf c}}
\newcommand\bomega{{\boldsymbol\omega}}
\newcommand\cA{{\mathcal A}}
\newcommand{\mfrac}[2]%
{\raisebox{0.5pt}{\footnotesize$\dfrac{#1}{#2}$}}
\newcommand{\mbinom}[2]%
{\raisebox{0.5pt}{\footnotesize$\dbinom{#1}{#2}$}}
\def\smmat\{#1&#2\cr#3&#4\}%
\newcommand\scrm{{\raise0.5pt\hbox{-}}}
\def\eop{{ \vrule height7pt width7pt depth0pt}\par\bigskip} 
\newcommand\ie{{\it\thinspace i.e.}}
{\par\noindent\textbf{Proof:}~}
\begin{document}

\title{Limits of level and parameter dependent subdivision schemes:\\ a matrix approach}

\author[mc]{Maria Charina\corref{cor1}}
\ead{maria.charina@univie.ac.at}

\author[cc]{Costanza Conti}
\ead{costanza.conti@unifi.it}

\author[ng]{Nicola Guglielmi}
\ead{guglielm@units.it}

\author[vp]{Vladimir Protasov}
\ead{v-protassov@yandex.ru}

\cortext[cor1]{Corresponding author}
\address[mc]{Fakult\"at Mathematik, Universit\"at Wien, Austria}
\address[cc]{DIEF, University of Firenze, Italy}
\address[ng]{INDIRIZZO, University of L'Aquila, Italy}
\address[vp]{Moscow State University, Russia}
\begin{abstract}
In this paper, we present a new matrix approach for the analysis
of subdivision schemes whose non-stationarity is due to linear
dependency on parameters whose values vary in a compact set.
Indeed, we show how to check the convergence in $C^{\ell}(\RR^s)$
and determine the H\"older regularity of such level and parameter
dependent schemes efficiently via the joint spectral radius
approach.
The efficiency of this method and the important role of the
parameter dependency are demonstrated on several examples of
subdivision schemes whose properties improve the properties of the
corresponding stationary schemes. Moreover, we derive necessary
criteria for a function to be generated by some level dependent
scheme and, thus, expose the limitations of such schemes.
\end{abstract}


\begin{keyword} Level dependent (non-stationary) subdivision schemes, tension parameter, sum rules,
H\"{o}lder regularity, joint spectral radius.
\end{keyword}
\maketitle


\section{Introduction}\label{sec:intro}
We analyze convergence and H\"older regularity of multivariate
level dependent (non-stationary) subdivision schemes whose masks
depend linearly on one or several parameters. For this type of
schemes, which include well-known schemes with tension parameters
\cite{BCR2007_1, BCR2007_2, CoGoPi07, ContiRomani10, FMW2007, FMW2010}, the
theoretical results from \cite{US2014} are applicable, but not
always efficient. Indeed, if the level dependent parameters vary
in some compact set, then the set of the so-called limit points
(see \cite{US2014}) of the corresponding sequence of
non-stationary masks exists, but cannot be determined explicitly.
This hinders the regularity analysis of such schemes. Thus, we
present a different perspective on the results in \cite{US2014}
and derive a new general method for convergence and regularity
analysis of such level and parameter dependent schemes. The
practical efficiency of this new method is illustrated on several
examples. We also derive necessary criteria that allow us to
describe the class of functions that can be generated by
non-stationary subdivision schemes. Indeed, we show how to
characterize such functions by the special property of the zeros
of their Fourier transforms.

Subdivision schemes are iterative algorithms for generating curves
and surfaces from given control points of a mesh. They are easy to
implement and intuitive in use. These and other nice mathematical
properties of subdivision schemes motivate their popularity in
applications, i.e. in modelling of freeform curves and surfaces,
approximation and interpolation of functions, computer animation,
signal and image processing etc. Non-stationary subdivision
schemes extend the variety of different shapes generated by
stationary subdivision. Indeed, the level dependency enables to
generate new classes of functions such as exponential polynomials,
exponential B-splines, etc. This gives a new impulse to
development of subdivision schemes and enlarges the scope of their
applications, e.g. in biological imaging \cite{DelUnser2012,
Noi2014}, geometric design \cite{ReifPeter08,WW02} or isogeometric
analysis \cite{Schroder2000, Umlauf10}.

The main challenges in the analysis of any subdivision scheme are
its convergence (in various function spaces), the regularity of
its limit functions and its generation and reproduction
properties. The important role of the matrix approach for
regularity analysis of stationary subdivision schemes is
well-known. It allows to reduces the analysis to the computation
or estimation of the joint spectral radius of the finite set of
square matrices derived from the subdivision mask. Recent advances
in the joint spectral radius computation~\cite{GP13, MR14} makes
the matrix approach very precise and efficient. In the
non-stationary setting, however, this approach has never been
applied because of the several natural obstacles. First of all,
the matrix products that emerge in the study of non-stationary
schemes have a different form than those usually analyzed by the
joint spectral radius techniques. Secondly, the masks of
non-stationary schemes do not necessarily satisfy sum rules, which
destroys the relation between the convergence of the scheme and
spectral properties of its transition matrices. All those
difficulties were put aside by the results in~\cite{US2014}, where
the matrix approach was extended to general non-stationary
setting.

In this paper, in Section \ref{sec:parameters}, we make the next
step and consider level and parameter dependent subdivision
schemes whose masks include tension parameters, used to control
the properties of the subdivision limit. Mostly, the tension
parameters are level dependent and influence the asymptotic
behavior of the scheme. If this is the case, the scheme can be
analyzed by \cite[Theorem 2]{US2014}, which states that the
convergence and H\"older regularity of any such non-stationary
scheme depends on the joint spectral radius of the matrices
generated by the so-called limit points of the sequence of
level-dependent masks. In Theorem~3.5, we show that for the
schemes with linear dependence on these parameters, the result of
\cite[Theorem 2]{US2014} can be simplified and be made more
practical, see examples in Section~\ref{subsec:examples}. In
Section \ref{sec:limitations}, we address the problem of
reproduction property of subdivision schemes and of characterizing
the functions that can be generated by non-stationary subdivision
schemes. This question is crucial in many aspects. For instance,
the reproduction of exponential polynomials is strictly connected
to the approximation order of a subdivision scheme and to its
regularity \cite{ContiRomaniYoon2015}. Essentially, the higher is
the number of exponential polynomials that are being reproduced,
the higher is the approximation order and the possible regularity
of the corresponding scheme.

\section{Background}
\bigskip \noindent Let
$M=mI \in \ZZ^{s \times s}$, $|m| \ge 2$, be a dilation matrix and
$E=\{0, \ldots,|m|-1\}^s$ be the set of the coset representatives
of $\ZZ^s / M \ZZ^s$. We study subdivision schemes given by the
sequence $\{S_{\ba^{(r)}}, \ r \in \NN\}$ of subdivision operators
$S_{\ba^{(r)}}: \ell(\ZZ^s) \rightarrow \ell(\ZZ^s)$ that define
the subdivision rules by
$$
 (S_{\ba^{(r)}} \bc)(\alpha)=\sum_{\beta \in \ZZ^s} \ra_{\alpha-M\beta}^{(r)} c(\beta),
 \quad \alpha \in \ZZ^s.
$$
The masks ${\mathbf a}^{(r)}=\{ \ra_{\alpha}^{(r)}, \ \alpha \in
\ZZ^s\}$, $r \in \NN$, are sequences of real numbers
$\ra_{\alpha}^{(r)}$ and are assumed to be all supported in $\{0,
\ldots,N\}^s$, $N \in \NN$. For the given set
\begin{equation} \label{def:K}
  K= \sum_{r=1}^\infty M^{-1}G,\quad G= \{-|m|,\ldots ,N + 1\}^s,
\end{equation}
the masks define the square matrices
\begin{equation}\label{del:matrices}
 A^{(r)}_{\varepsilon}=\left( \ra^{(r)}_{M\alpha+\varepsilon-\beta}\right)_{\alpha,\beta \in K},
 \quad r \in \NN, \quad \varepsilon \in E.
\end{equation}
We assume that the level dependent symbols
$$
 a^{(r)}(z)= \sum_{\alpha \in \ZZ^s} \ra_{\alpha}^{(r)} z^{\alpha}, \quad
 z^{\alpha}=z_1^{\alpha_1} \cdot \ldots \cdot z_s^{\alpha_s},\quad z\in
 \left(\CC \setminus \{0\} \right)^s.
$$
of the subdivision scheme
$$
 c^{(r+1)}=S_{\ba^{(r)}} c^{(r)}=S_{\ba^{(r)}}S_{\ba^{(r-1)}} \ldots S_{\ba^{(1)}} c^{(1)},
 \quad r \in \NN,
$$
satisfy sum rules of order $\ell+1$, $\ell \in \NN_0$. For more
details on sum rules see e.g \cite{Cabrelli,
CaravettaDahmenMicchelli, JetterPlonka, JiaJiang}.

\begin{definition}  Let  $\ell \in \NN_0$, $r \in \NN$.
The symbol $a^{(r)}(z)$, $z \in (\CC \setminus \{0\})^s$, satisfies sum
rules of order $\ell+1$ if
\begin{equation} \label{def:sumrules}
 a^{(r)}(1, \ldots,1)=|m|^s \quad\hbox{and}\quad
 \max_{|\eta| \le \ell}\  \max_{\epsilon \in \Xi \setminus
\{1\}} | D^\eta a^{(r)}(\epsilon)|=0\,,
\end{equation}
where $
 \Xi=\{e^{-i\frac{2\pi}{|m|}\varepsilon}=(e^{-i\frac{2\pi}{|m|}\varepsilon_1},
 \ldots,
 e^{-i\frac{2\pi}{|m|}\varepsilon_s}), \ \varepsilon \in E\}$ and
 $D^\eta=\frac{\partial^{\eta_1}}{\partial z_1^{\eta_1}} \ldots \frac{\partial^{\eta_s}}{\partial
 z_s^{\eta_s}}$.
\end{definition}

The assumption that all symbols $a^{(r)}(z)$ satisfy sum rules of
order $\ell+1$, guarantees that the matrices
$A^{(r)}_{\varepsilon}$, $\varepsilon \in E$, $r \in \NN$, in
\eqref{del:matrices} have common left-eigenvectors of the form
$$
  \left( p(\alpha) \right)_{\alpha \in K}, \quad p \in \Pi_\ell,
$$
where $\Pi_\ell$ is the space of polynomials of degree less than
or equal to $\ell$. Thus, the matrices $A^{(r)}_{\varepsilon}$,
$\varepsilon \in E$, $r \in \NN$,  possess a common linear
subspace $V_\ell \subset \RR^{|K|}$ orthogonal to the span of the
common left-eigenvectors of $A^{(r)}_{\varepsilon}$, $\varepsilon \in
E$, $r \in \NN$. The spectral properties of the set
$$
 {\cal T}=\{ A^{(r)}_{\varepsilon}|_{V_\ell},  \ \varepsilon \in E, \ r \in \NN \}
$$
determine the regularity of the non-stationary scheme,
see \cite{US2014}.

\begin{remark}
In the  univariate case, i.e. $|m|=|M|$, the assumption that the
symbols $a^{(r)}(z)$, $r \in \NN$, satisfy sum rules of order
$\ell+1$ implies that
$$
 a^{(r)}(z)=(1+z+ \ldots + z^{|m|-1})^\ell \sum_{\alpha \in \ZZ} b^{(r)}_{\alpha} z^{\alpha},
 \quad z \in \CC \setminus \{0\},
$$
and
\begin{equation}\label{eq:defA_eps_rest_V_uni}
    A^{(r)}_{\varepsilon}|_{V_\ell}=\left( b^{(r)}_{M\alpha+\varepsilon-\beta}\right)_
    {\alpha, \beta \in \{0, \ldots, N-\ell\}}, \quad \varepsilon \in E.
\end{equation}
In the multivariate case, the explicit form of the matrices
$A^{(r)}_{\varepsilon}|_{V_\ell}$, $\varepsilon \in E$, $r \in
\NN$, depends on the choice of the basis of ${V_\ell}$, see e.g.
\cite[Section 3.1]{US2014} or \cite{Cabrelli}.
\end{remark}

\begin{definition} \label{def:Cellconvergence}
A subdivision scheme $\{S_{\ba^{(r)}}, \  r\in \NN \}$ is
\emph{$C^\ell$-convergent}, if for any initial sequence $\bc \in
\ell_\infty(\ZZ^s)$ there exists the
 limit function $g_\bc \in C^\ell(\RR^s)$ such that for any test function
$f \in C^\ell(\RR^s)$
\begin{equation}\label{eq:C^l_convergence}
  \lim_{k \to\infty} \Big \|  g_{\bc}(\cdot) - \sum_{\alpha \in \ZZ^s} S_{\ba^{(r)}}
 S_{\ba^{(r-1)}}  \ldots  S_{\ba^{(1)}}c(\alpha) f(M^k\cdot  - \alpha) \Big\|_{C^\ell}=0.
\end{equation}
\end{definition}

\noindent For more details on test functions see \cite{DM97}. Note
that, it suffices to check \eqref{eq:C^l_convergence} for only one test
function $f$. Note also that, if all limits of a subdivision
scheme belong to $C^\ell(\RR^s)$, then the scheme may not converge
in $C^\ell$, but only in $C^0(\RR^s)$.

\smallskip \noindent
In this paper, we also show how to estimate the H\"older
regularity of subdivision limits.

\begin{definition} The \emph{H\"older regularity} of the $C^0-$convergent
scheme $\{S_{\ba^{(r)}}, \  r\in \NN\}$ is $\alpha=\ell+\zeta$, if
$\ell$ is the largest integer such that $g_{\bc} \in
C^\ell(\RR^s)$ and  $\zeta$ is the supremum of $\nu \in [0,1]$
such that
$$
 \max_{\mu \in \NN_0^s,  |\mu|=\ell} |D^\mu g_{\bc}(x)-D^\mu g_{\bc}(y)| \le
  |x-y|^\nu, \quad x,y \in \RR^s.
$$
We call $\alpha$ the \emph{H\"older exponent} of $\{S_{\ba^{(r)}},
\ r\in \NN\}$.
\end{definition}

\medskip \noindent
The joint spectral radius of a set of square matrices was
introduced in \cite{RotaStrang} and is independent of the choice
of the matrix norm $\|\cdot\|$.

\begin{definition}\label{def:JSR} The joint spectral radius (JSR) of a compact
family ${\cM}$ of square matrices  is defined by
$$
\displaystyle{ \rho({\cM}):=\lim_{n \rightarrow \infty}
\max_{M_{1}, \ldots, M_n \in \cM} \left\|\prod_{j=1}^n M_{j}
\right\|^{1/n}.}$$
\end{definition}

\noindent The link between the JSR and subdivision is well-known,
see e.g. \cite{Charina, CJR02, DL1992, J95, H03}.

\section{Parameter dependent subdivision schemes: matrix approach}\label{sec:parameters}

There are several examples of subdivision schemes that include a
tension parameter. We call them parameter dependent schemes. Often
the tension parameter is level dependent and shows a certain
asymptotic behavior which implies the asymptotic behavior of the
corresponding non-stationary scheme, i.e. $\displaystyle \lim_{r
\rightarrow \infty} \mathbf{a}^{(r)}=\mathbf{a}$. In this case,
although the set $\{A_{\varepsilon}^{(r)}, \ \varepsilon \in E, \
r \in \NN \}$ is not compact, the convergence and regularity of
the scheme $\{S_{\mathbf{a}^{(r)}}, \ r \in \NN\}$ can be analyzed
via the joint spectral radius approach in \cite{US2014}. The
results in \cite{US2014} are still applicable even if the
parameter values vary in some compact interval. Indeed, the
existence of the limit points for the sequence
$\{\mathbf{a}^{(r)}, \ r \in \NN\}$ of the subdivision masks  is
guaranteed, though these limit points are not always explicitly
known.

\begin{definition} \label{def:set_of_limit_points} For the mask sequence
$\{\mathbf{a}^{(r)},\ \ r \in \NN\}$ we denote by $\cA$ the set of
its limit points, i.e. the set of masks $\mathbf{a}$ such that
$$ \mathbf{a}\in \cA,\quad \hbox{if} \quad \exists \{r_n,\ n\in \NN \}\ \ \mbox{such that}\
 \ \lim_{n\rightarrow\infty}\mathbf{a}^{(r_n)}=\mathbf{a}\,.
$$
\end{definition}

In this section, we show that the joint spectral radius approach
can be effectively applied even if  the limit points of
$\{\mathbf{a}^{(r)}, \ r \in \NN\}$ cannot be determined
explicitly, but the masks $\mathbf{a}^{(r)}$ depend linearly on
the parameter $\omega^{(r)} \in [\omega_1, \omega_2]$,
$-\infty<\omega_1<\omega_2<\infty$.

Well-known and celebrated examples of parameter dependent stationary subdivision schemes with linear
dependence on the parameter are e.g. the univariate four point scheme \cite{DynGreLev87} with the
symbol
$$
a(z, \omega)=\frac{(1+z)^2}{2}+\omega(-z^{-2}+1+z^2-z^{4}), \quad
\omega \in \left[0,\frac{1}{16} \right], \quad z \in \CC \setminus
\{0\},
$$
which is a parameter perturbation of the linear B-spline. Also the bivariate butterfly scheme \cite{DLG90} with
the symbol
$$
 a(z_1,z_2, \omega)=\frac{1}{2}(1+z_1)(1+z_2)(1+z_1z_2)+\omega \,c(z_1,z_2), \quad z_1,z_2 \in \CC \setminus \{0\},
$$
with
\begin{eqnarray} \label{def:c_butterfly}
c(z_1,z_2)&=&z_1^{-1}z_2^{-2} + z_2^2 z_1^{-1} + z_1^{-2}z_2^{-1}
+ z_1^2z_2^{-1} - 2z_1^2z_2^3 - 2z_1^3z_2^2 + z_1^2z_2^4 +
z_1^4z_2^2 + z_1^3z_2^4 \notag \\&+& z_1^4z_2^3 - 2z_1^{-1} +
z_1^{-2} - 2z_1^2 - 2z_2^{-1} + z_1^3 + z_2^{-2} - 2z_2^2 + z_2^3
\end{eqnarray}
is a parameter perturbation of the linear three-directional box spline.
Other examples of such parameter dependent schemes are those with symbols that are convex combinations
$$
 \omega\, a(z_1,z_2)+(1-\omega)\,b(z_1,z_2)=b(z_1,z_2)+\omega\, (a(z_1,z_2)-b(z_1,z_2)), \quad \omega \in [0,1], \quad z_1,z_2 \in \CC \setminus \{0\},
$$
of two (or more) symbols of stationary schemes, see  e.g. \cite{GoPi2000, CoGoPiSa08, ContiCMS2010, CharinaContiJetterZimm11}. Known are also their non-stationary univariate counterparts with level dependent
parameters $\omega^{(r)}$ (see \cite{BCR2007_1, BCR2007_2, CoGoPi07, ContiRomani10}, for example)
$$
\frac{(1+z)^2}{2}+\omega^{(r)}(-z^{-2}+1+z^2-z^{4}), \quad r \in
\NN, \quad \lim_{r \rightarrow \infty}\omega^{(r)}=\omega \in \RR,
$$
and
$$
\omega^{(r)}\, a(z)+(1-\omega^{(r)})\,b(z), \quad r \in \NN, \quad
\omega^{(r)} \in [0,1].
$$
Note that the use of the level dependent parameters sometimes
allows us to enhance the properties of the existing stationary
schemes (e.g. with respect to their smoothness, size of their
support or reproduction and generation properties
\cite{CharinaContiJetterZimm11,ContiCMS2010,CoGoPi07,ContiRomani10}).

\smallskip
\noindent In all schemes considered above, the subdivision rules
depend either on the same, fixed, parameter $\omega=\omega^{(r)}
\in [\omega_1,\omega_2]$ independent of $r$, or the parameters
$\omega^{(r)} \in [\omega_1,\omega_2]$ are chosen in a such a way
that either $\displaystyle \lim_{r \rightarrow \infty}
\omega^{(r)}=\omega \in [\omega_1,\omega_2]$ or the corresponding
non-stationary scheme is asymptotically equivalent to some known
stationary scheme. In this section, we provide a matrix method for
analyzing regularity of more general subdivision schemes: we
consider the level dependent masks ${\mathbf a}(\omega^{(r)})=\{
\ra_{\alpha}(\omega^{(r)}), \ \alpha \in \ZZ^s\}$,  $r \in \NN$,
and require that $\omega^{(r)} \in [\omega_1,\omega_2]$ without
any further assumptions on the behavior of the sequence
$\{\omega^{(r)}, \ r \in \NN\}$. We assume, however, that each of
the masks depends linearly on the corresponding parameter
$\omega^{(r)}$.

The level dependent masks $\{{\mathbf a}(\omega^{(r)}), \ r \in \NN\}$ define the corresponding square matrices which we denote by
\begin{equation} \label{A_eps_r}
A_{\varepsilon, \omega^{(r)}}=\left(
\ra_{M\alpha+\varepsilon-\beta}(\omega^{(r)})\right)_{\alpha,\beta
\in K}, \quad \varepsilon \in E,
\end{equation}
and the level dependent symbols
$$
 a(z,\omega^{(r)})= \sum_{\alpha \in \ZZ^s} \ra_{\alpha}(\omega^{(r)}) z^{\alpha},
 \quad z^{\alpha}=z_1^{\alpha_1} \cdots z_s^{\alpha_s},\quad z\in \left(\CC \setminus \{0\} \right)^s.
$$

The assumption that each mask ${\mathbf a}(\omega^{(r)})$ depends linearly on $\omega^{(r)}$, leads to
the following immediate, but crucial result.

\begin{proposition} \label{prop:linearity} Let $\ell \in \NN_0$ and $-\infty<\omega_1<\omega_2 < \infty$.
If every symbol of the sequence $\{ a(z,\omega^{(r)}),\ r \in
\NN\}$ depends linearly on the parameter $\omega^{(r)} \in
[\omega_1,\omega_2]$ and satisfies sum rules of order $\ell+1$,
then every matrix in ${\cal T}=\{ A_{\varepsilon,
\omega^{(r)}}|_{V_\ell}, \  \omega^{(r)} \in [\omega_1,\omega_2],
\ \varepsilon \in E, \ r \in \NN\}$ is a convex combination of the
matrices with $\omega^{(r)} \in \{\omega_1, \omega_2\}$
$$
 A_{\varepsilon, \omega^{(r)}}|_{V_\ell}=(1-t^{(r)})A_{\varepsilon,\omega_1}|_{V_\ell}+t^{(r)}
 A_{\varepsilon, \omega_2}|_{V_\ell},\quad t^{(r)} \in [0,1].
$$
\end{proposition}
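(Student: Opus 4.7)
The plan is to exploit the linearity of the mask in the scalar parameter, which propagates immediately to the mask coefficients, then to the transition matrices, and finally to their restrictions to $V_\ell$.

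First I would write out the linearity assumption explicitly. Since $a(z,\omega^{(r)})$ depends linearly on $\omega^{(r)}$, there exist two Laurent polynomials $p(z)$ and $q(z)$ (independent of $r$) such that
$$
 a(z,\omega^{(r)}) = p(z) + \omega^{(r)} q(z),
$$
which on the coefficient level reads $\ra_\alpha(\omega^{(r)}) = \rp_\alpha + \omega^{(r)} \rq_\alpha$ for all $\alpha \in \ZZ^s$. Substituting this into the definition \eqref{A_eps_r} of $A_{\varepsilon, \omega^{(r)}}$, the matrix entries depend affinely on $\omega^{(r)}$, hence
$$
 A_{\varepsilon, \omega^{(r)}} \;=\; P_\varepsilon + \omega^{(r)} Q_\varepsilon,
 \qquad \varepsilon \in E,
$$
for two fixed matrices $P_\varepsilon, Q_\varepsilon$ determined by $p$ and $q$.

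Next I would verify that the restriction to $V_\ell$ is well defined for every $\omega^{(r)} \in [\omega_1,\omega_2]$. Since each $a(z,\omega^{(r)})$ satisfies sum rules of order $\ell+1$ by assumption, the discussion following \eqref{def:sumrules} guarantees that $V_\ell$ (the common orthogonal complement of the polynomial left-eigenvectors) is invariant under every matrix $A_{\varepsilon, \omega^{(r)}}$, $\varepsilon \in E$. Restriction to $V_\ell$ is a linear operation, so the affine relation above survives restriction:
$$
 A_{\varepsilon, \omega^{(r)}}|_{V_\ell} \;=\; P_\varepsilon|_{V_\ell} + \omega^{(r)} Q_\varepsilon|_{V_\ell}.
$$
In particular, setting $\omega^{(r)} = \omega_1$ and $\omega^{(r)} = \omega_2$ yields the two endpoint matrices $A_{\varepsilon,\omega_1}|_{V_\ell}$ and $A_{\varepsilon,\omega_2}|_{V_\ell}$.

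Finally, for any $\omega^{(r)} \in [\omega_1,\omega_2]$ there is a unique $t^{(r)} \in [0,1]$ with $\omega^{(r)} = (1-t^{(r)})\omega_1 + t^{(r)}\omega_2$. Substituting this convex combination into the affine expression above and using linearity in $\omega^{(r)}$ gives exactly
$$
 A_{\varepsilon, \omega^{(r)}}|_{V_\ell} = (1-t^{(r)}) A_{\varepsilon,\omega_1}|_{V_\ell} + t^{(r)} A_{\varepsilon,\omega_2}|_{V_\ell},
$$
which is the desired identity. There is no real obstacle here: the only non-tautological ingredient is the invariance of $V_\ell$ under the entire affine family, which is supplied by the uniform sum-rule assumption, and everything else is a direct consequence of affinity in a single scalar parameter.
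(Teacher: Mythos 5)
Your proposal is correct and follows essentially the same route as the paper's proof: write $\omega^{(r)}$ as a convex combination of $\omega_1$ and $\omega_2$, observe that the matrix entries (being mask coefficients) depend affinely on $\omega^{(r)}$, and note that restriction to the common invariant subspace $V_\ell$ preserves this affine dependence. You are merely more explicit than the paper in writing $a(z,\omega^{(r)})=p(z)+\omega^{(r)}q(z)$ and in flagging the invariance of $V_\ell$ as the one non-tautological ingredient, which is a reasonable elaboration rather than a different argument.
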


\begin{proof}
Let $r \in \NN$. We first write $\omega^{(r)}$ as a convex
combination of $\omega_1$ and $\omega_2$, i.e.
$$
 \omega^{(r)}=(1-t^{(r)}) \omega_1 +t^{(r)} \omega_2 \quad \hbox{with}\quad t^{(r)} \in [0,1]\,.
$$
Note that all entries of the matrices $A_{\varepsilon,
\omega^{(r)}}$, $\varepsilon \in E$, are the coefficients of the
corresponding mask  ${\mathbf a}(\omega^{(r)})$. Since the mask
coefficients depend linearly on the parameter $\omega^{(r)}$, so
do the matrices $A_{\varepsilon, \omega^{(r)}}$, and hence, the
corresponding linear operators. Therefore, the restrictions of
these operators to their common invariant subspace $V_\ell$ also
depend linearly on this parameter.
\end{proof}

\noindent In the level independent case, i.e.
$\omega^{(r)}=\omega$ for all $r \in \NN$, the use of the joint
spectral radius approach for studying the convergence and
regularity of the corresponding stationary subdivision schemes is
well understood. To show how this approach can be applied in the
our non-stationary setting, we need first to prove the following
auxiliary result.

\begin{proposition}\label{prop:Nicola} Let $\ell \in \NN_0$ and
\begin{equation}\label{def:A_omega}
{\cal T}= \{ A_{\varepsilon,\omega^{(r)}}|_{V_\ell},\ \omega^{(r)}
\in [\omega_1,\omega_2],\ \varepsilon \in E, \ r \in \NN\}
\end{equation}
be the infinite family of square matrices.  If the JSR of the
family ${\cal T}_{\omega_1, \omega_2}=\{
A_{\varepsilon,\omega_1}|_{V_\ell},
A_{\varepsilon,\omega_2}|_{V_\ell},  \ \varepsilon \in E \}$
satisfies $\rho({\cal T}_{\omega_1, \omega_2}) = \gamma,$ then
$\rho \left( {\cal T} \right) = \gamma$.
\end{proposition}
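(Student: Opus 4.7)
The plan is to use a sandwich argument built on two facts: (i) the linearity proved in Proposition~3.1, which puts $\mathcal{T}$ between $\mathcal{T}_{\omega_1,\omega_2}$ and its convex hull, and (ii) the classical result from JSR theory that the joint spectral radius of a bounded family of matrices is invariant under passage to the convex hull.

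First I would record the two elementary set-inclusions. On the one hand, since $\omega^{(r)}$ is allowed to take the endpoint values $\omega_1$ and $\omega_2$, one has $\mathcal{T}_{\omega_1,\omega_2}\subseteq\mathcal{T}$. On the other hand, Proposition~3.1 yields that every matrix $A_{\varepsilon,\omega^{(r)}}|_{V_\ell}\in\mathcal{T}$ is a convex combination of the two matrices $A_{\varepsilon,\omega_1}|_{V_\ell}$ and $A_{\varepsilon,\omega_2}|_{V_\ell}$ corresponding to the same digit $\varepsilon$. Consequently
\begin{equation*}
 \mathcal{T}_{\omega_1,\omega_2}\ \subseteq\ \mathcal{T}\ \subseteq\ \mathrm{conv}\bigl(\mathcal{T}_{\omega_1,\omega_2}\bigr).
\end{equation*}
In particular, $\mathcal{T}$ is bounded (it is contained in the convex hull of a finite set of matrices), so its joint spectral radius is well defined.

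Second, I would invoke monotonicity of the JSR with respect to set inclusion: $\mathcal{A}\subseteq\mathcal{B}$ implies $\rho(\mathcal{A})\le\rho(\mathcal{B})$, which follows directly from Definition~\ref{def:JSR}. Applying this to the above chain gives
\begin{equation*}
 \gamma\ =\ \rho(\mathcal{T}_{\omega_1,\omega_2})\ \le\ \rho(\mathcal{T})\ \le\ \rho\bigl(\mathrm{conv}(\mathcal{T}_{\omega_1,\omega_2})\bigr).
\end{equation*}
The proof is then closed by the convex-hull invariance of the joint spectral radius, $\rho(\mathrm{conv}(\mathcal{M}))=\rho(\mathcal{M})$ for any bounded family $\mathcal{M}$ of square matrices, which is a standard theorem in JSR theory (going back to Barabanov; see e.g.\ the references in the paper on JSR computation already cited). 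Combining these yields $\rho(\mathcal{T})=\gamma$, as claimed.

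The only step that is not a one-line observation is the convex-hull invariance, so the main ``work'' of the proof is really just invoking the correct result. I would briefly justify it (or cite it) rather than re-derive it: the key idea is that a long product of matrices from $\mathrm{conv}(\mathcal{M})$ can be expanded into a convex combination of products of matrices from $\mathcal{M}$, each of the same length, whence $\|\prod_{j=1}^n M_j\|\le\max\|\prod_{j=1}^n N_j\|$ over $N_j\in\mathcal{M}$, and taking $n$-th roots and the limit preserves the inequality $\rho(\mathrm{conv}(\mathcal{M}))\le\rho(\mathcal{M})$; the reverse inequality is immediate from $\mathcal{M}\subseteq\mathrm{conv}(\mathcal{M})$. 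No further obstacle arises because the index set $E$ is finite and the mask entries are continuous (in fact affine) functions of $\omega$, so all compactness/boundedness considerations are automatic.
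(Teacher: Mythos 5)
Your proof is correct, but it reaches the conclusion by a different key lemma than the paper. The paper's proof works with norms: from $\rho({\cal T}_{\omega_1,\omega_2})=\gamma$ it extracts, for each $\delta>0$, a $\delta$-extremal operator norm $\|\cdot\|_\delta$ in which both generators $A_{\varepsilon,\omega_1}|_{V_\ell}$ and $A_{\varepsilon,\omega_2}|_{V_\ell}$ have norm at most $\gamma+\delta$; then Proposition~\ref{prop:linearity} together with convexity of the norm gives $\|A_{\varepsilon,\omega^{(r)}}|_{V_\ell}\|_\delta\le\gamma+\delta$ for every member of ${\cal T}$, whence $\rho({\cal T})\le\gamma+\delta$ for all $\delta>0$. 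You instead sandwich ${\cal T}$ between ${\cal T}_{\omega_1,\omega_2}$ and its convex hull and invoke the convex-hull invariance of the JSR, which you justify by the multilinear expansion of a product of convex combinations into a convex combination of products of the generators. Both arguments are sound and both rest on Proposition~\ref{prop:linearity}; the trade-off is that your route is more elementary in that it avoids the existence theorem for $\delta$-extremal norms (a nontrivial fact, cited in the paper from Elsner and Guglielmi--Zennaro), at the cost of importing (or re-proving) the convex-hull invariance theorem, whereas the paper's norm-based argument is self-contained modulo that existence result and also sets up the non-defectiveness discussion in Remark~\ref{rem:JSRsubsets}. One small point in your favor: you state the lower bound $\gamma\le\rho({\cal T})$ explicitly via the inclusion ${\cal T}_{\omega_1,\omega_2}\subseteq{\cal T}$, which the paper leaves implicit when it passes from $\rho({\cal T})\le\gamma+\delta$ to the claimed equality.
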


\begin{proof}
First of all observe that $\rho({\cal T}_{\omega_1, \omega_2}) = \gamma$ implies, for any $\delta > 0$,
the existence of a $\delta$-extremal norm (see e.g. \cite{Els95,GZ01}), i.e.  an operator
norm $\| \cdot \|_\delta$ such that
\begin{equation}
\| A_{\varepsilon,\omega_1}|_{V_\ell} \|_\delta \le \gamma +
\delta, \qquad \| A_{\varepsilon,\omega_2}|_{V_\ell} \|_\delta \le
\gamma + \delta. \label{eq:deltaextremality0}
\end{equation}
Then, by Proposition \ref{prop:linearity}, estimates in \eqref{eq:deltaextremality0} and
subadditivity of
matrix operator norms, we get
\[
\| A_{\varepsilon,\omega^{(r)}}|_{V_\ell} \|_\delta = \|
(1-t^{(r)}) A_{\varepsilon,\omega_1}|_{V_\ell} + t^{(r)}
A_{\varepsilon,\omega_2}|_{V_\ell}\|_\delta \le (1-t^{(r)}) \|
A_{\varepsilon,\omega_1}|_{V_\ell} \|_\delta + t^{(r)} \|
A_{\varepsilon,\omega_2}|_{V_\ell} \|_\delta = \gamma + \delta,
\quad t^{(r)} \in [0,1].
\]
This, due to the arbitrary choice of $\delta > 0$, implies that
$\rho \left( {\cal T} \right) = \gamma$, which concludes the proof.
\end{proof}

\begin{remark} \label{rem:JSRsubsets} $(i)$
Note that, if the family ${\cal T}_{\omega_1, \omega_2}$ is
non-defective, i.e. there exists an extremal norm $\| \cdot \|$
such that $ \max_{\varepsilon \in E} \left\{ \|
A_{\varepsilon,\omega_1}|_{V_\ell} \|, \ \|
A_{\varepsilon,\omega_2}|_{V_{\ell}} \| \right\} = \gamma, $ then
${\cal T}$ is also non-defective and all products of degree $d$ of
the associated product semigroup have maximal growth bounded by
$\gamma^d$. Note also that for any family of matrices ${\cal B}$,
${\cal B}\subset {\cal T}$, it  follows that
 $\rho \left( {\cal B}\right) \le \gamma$.
$(ii)$ Moreover, if a family $\mathcal T$ is irreducible, i.e.,
its matrices do not have a common nontrivial subspace, then
$\mathcal T$ is non-defective. Therefore, the case of
non-defective families is quite general.
\end{remark}

We are now ready to formulate the  main result of this section.

\begin{theorem}\label{teo:JSRregularity_r} Let $\ell \in \NN_0$. Assume that every
symbol of the sequence $\{a(z,\omega^{(r)}),\ r \in \NN\}$
depends linearly on $\omega^{(r)} \in [\omega_1,\omega_2]$ and
satisfies sum rules of order $\ell+1$. Then the non-stationary
scheme $\{S_{{\mathbf a}(\omega^{(r)})}, \ r \in \NN\}$ is
$C^\ell$-convergent, if  the JSR  of the family ${\cal
T}_{\omega_1, \omega_2} =\{ A_{\varepsilon,\omega_1}|_{V_{\ell}},
A_{\varepsilon,\omega_2}|_{V_{\ell}},  \ \varepsilon \in E \}$
satisfies
\begin{equation}
\rho({\cal T}_{\omega_1, \omega_2}) = \gamma < |m|^{-\ell}.
\label{eq:jsrA}
\end{equation}
Moreover the H\"{o}lder exponent of its limit functions is $\alpha \ge -\log_{|m|} \gamma$.
\end{theorem}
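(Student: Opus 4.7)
The plan is to combine the two preceding propositions with the general non-stationary regularity theorem of \cite{US2014}. More precisely, \cite[Theorem 2]{US2014} states that a non-stationary scheme whose symbols satisfy sum rules of order $\ell+1$ is $C^\ell$-convergent whenever the JSR of the matrices associated with the set $\cA$ of limit points of the mask sequence (restricted to $V_\ell$) is strictly less than $|m|^{-\ell}$, and that the H\"older exponent is bounded below by $-\log_{|m|}$ of that JSR. So the task reduces to comparing this ``limit-point JSR'' with the JSR of the two-matrix family ${\cal T}_{\omega_1,\omega_2}$ that appears in our assumption.

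First I would observe that, by linearity of $\mathbf{a}(\omega)$ in $\omega$ and continuity of the map $\omega\mapsto \mathbf{a}(\omega)$, every limit point of $\{\mathbf{a}(\omega^{(r)}),\ r\in \NN\}$ in the sense of Definition~\ref{def:set_of_limit_points} is of the form $\mathbf{a}(\omega^*)$ for some $\omega^*\in [\omega_1,\omega_2]$, because $[\omega_1,\omega_2]$ is compact. Consequently, the associated matrix family, obtained by restriction to $V_\ell$, is a subset of the infinite family $\cal T$ introduced in \eqref{def:A_omega}.

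Next I would invoke Proposition~\ref{prop:Nicola} to conclude that $\rho({\cal T}) = \rho({\cal T}_{\omega_1,\omega_2}) = \gamma$, and then use Remark~\ref{rem:JSRsubsets}$(i)$ to deduce that the JSR of any subfamily of $\cal T$, and in particular of the limit-point family, is bounded by $\gamma$. The hypothesis \eqref{eq:jsrA} then gives that this JSR is strictly less than $|m|^{-\ell}$, so \cite[Theorem 2]{US2014} applies and delivers both the $C^\ell$-convergence and the H\"older exponent bound $\alpha \ge -\log_{|m|}\gamma$.

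The only delicate point is the passage from the infinite parameter family to the two-endpoint family: without Proposition~\ref{prop:linearity} we would have to work directly with an infinite, non-finitely-generated semigroup of matrices, and the JSR would be hard to control. The convex combination identity on $V_\ell$ supplied by that proposition, together with the $\delta$-extremal norm argument used in Proposition~\ref{prop:Nicola}, is precisely what reduces the problem to computing a JSR of finitely many matrices, which is the computationally tractable quantity used in the examples of Section~\ref{sec:parameters}.
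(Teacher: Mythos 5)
Your proposal is correct and follows essentially the same route as the paper's own proof: identify the set of limit points of the mask sequence, note that the associated matrix family is contained in ${\cal T}$, apply Proposition~\ref{prop:Nicola} together with Remark~\ref{rem:JSRsubsets}$(i)$ to bound its JSR by $\gamma$, and conclude via the general non-stationary result of \cite{US2014}. The only cosmetic difference is that the paper invokes \cite[Corollary 1]{US2014} rather than Theorem~2, and you spell out slightly more explicitly why the limit points lie in the parameter family.
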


\begin{proof}
Since the  parameters $\{\omega^{(r)},\ r \in \NN\}$ vary in the
compact interval $[\omega_1, \omega_2]$, there exists a set of
limits points (finite or infinite) for the sequence $\{{\mathbf
a}(\omega^{(r)}),\ r \in \NN\}$ of subdivision masks. Let us
denote this set by $\cal A$ and the corresponding set of square
matrices by ${\cal
T}_\cA=\{A_\varepsilon=(\ra_{M\alpha+\varepsilon-\beta})_{\alpha,\beta
\in K}, \ \varepsilon \in E, \ {\mathbf a} \in \cA\}$. Obviously,
${\cal T}_\cA \subset {\cal T}$ with ${\cal T}$ as in
\eqref{def:A_omega}. Since by Proposition \ref{prop:Nicola} and
Remark \ref{rem:JSRsubsets}, $\rho \left( {\cal T}_\cA\right)\le
\gamma$, the claim follows by \cite[Corollary 1]{US2014}.
\end{proof}

\begin{remark} $(i)$ Note that, due to $\rho({\cal T}_\cA) \le \gamma$,
Theorem \ref{teo:JSRregularity_r}
yields a smaller H\"older exponent $\alpha$ than what could be
obtained by  \cite[Corollary 1]{US2014}. For example, consider
binary subdivision scheme with the symbols
\begin{eqnarray*}
a(z,\omega^{(r)})&=&z^{-1}\frac{(1+z)^2}{2}, \quad \quad r \in \{1,\ldots,L\}, \quad L \in \NN, \\
a(z,\omega^{(r)})&=&z^{-1}\frac{(1+z)^2}{2}+\frac{1}{16}(-z^{-3}+z^{-1}+z-z^{3}),\quad r\ge L+1,
\quad z \in \CC \setminus \{0\}.
\end{eqnarray*}
To apply Theorem \ref{teo:JSRregularity_r}, we can view the
corresponding masks as being linearly dependent on parameters
$\omega^{(r)} \in [0,\frac{1}{16}]$. The corresponding family
${\cal T}_{0,\frac{1}{16}}=\{ A_{\varepsilon,0}|_{V_1},
A_{\varepsilon,\frac{1}{16}}|_{V_1},  \ \varepsilon \in \{0,1\}
\}$ consists of the four  matrices
\begin{equation} \label{def:Matrices_4_point}
A_{0,\omega}|_{V_1}= \left( \begin{array}{rrrr}
      -\omega  & -2\omega+\frac{1}{2} &   -\omega &   0 \\
       0 & 2\omega &  2\omega &  0 \\
           0  &  -\omega &   -2\omega+\frac{1}{2} & -\omega \\
           0  & 0 &   2\omega &  2\omega
             \end{array} \right), \quad
   A_{1,\omega}|_{V_1}= \left( \begin{array}{rrrr}
     2\omega  &  2\omega &  0 &        0 \\
           -\omega  &  -2\omega+\frac{1}{2} &  -\omega &        0 \\
           0  & 2\omega &  2\omega &  0 \\
           0  & -\omega &  -2\omega+\frac{1}{2}   &  -\omega
             \end{array} \right)
\end{equation}
for $\omega \in\{0,\frac{1}{16}\}$. Due to
$$
 \max_{\varepsilon \in \{0,1\}} \left\{ \|A_{\varepsilon,0}|_{V_1}\|_\infty,
\|A_{\varepsilon,\frac{1}{16}}|_{V_1}\|_\infty \right\}= \max
_{\varepsilon \in \{0,1\}} \left\{ \rho(A_{\varepsilon,0}|_{V_1}),
\rho(A_{\varepsilon,\frac{1}{16}}|_{V_1})\right\}=\frac{1}{2},
$$
we get $\rho({\cal T}_{0, \frac{1}{16}}) =\frac12$ and, thus, the
corresponding scheme is convergent and has the H\"{o}lder exponent
$\alpha \ge 1$. On the other hand, the set $\cA$ of limit points
of the masks can be explicitly determined in this case and
consists of the mask of the four point scheme. Thus, by
\cite[Corollary 1]{US2014}, the H\"older exponent is actually
$\alpha \ge 2$.

$(ii)$ The regularity estimate given in  Theorem
\ref{teo:JSRregularity_r} can be improved, if the actual range of
the parameters  $\omega^{(r)}$, $r \ge L$, for some $L \in \NN$,
is a subinterval of  $[\omega_1,\omega_2]$, see section
\ref{subsec:examples}.

$(iii)$ Note that the result of Theorem \ref{teo:JSRregularity_r}
is directly extendable to the case when the matrix family $\cal T$
depends linearly on a convex polyhedral set
$\Omega=\overline{\hbox{co}\{\bomega_1, \ldots, \bomega_L \}}$ of
parameters $\bomega^{(r)} \in \Omega \subset \RR^p$, $r \in \NN$,
such that
\[
\bomega^{(r)} = \sum\limits_{j=1}^{L} t^{(r)}_j \bomega_j \quad
\mbox{with} \quad t^{(r)}_j \in [0,1] \quad  \mbox{and} \ \sum
\limits_{j=1}^{L} t^{(r)}_j=1.
\]
This is the case, for example, when we define the level and parameter dependent symbols
$$
a(z,\bomega^{(r)})=\sum \limits_{j=1}^{p} \omega_j^{(r)}a_j(z),
\quad \bomega^{(r)}=(\omega_1^{(r)}, \ldots, \omega_p^{(r)})^T \in
\Omega, \quad r \in \NN.
$$
\end{remark}

\subsection{Examples}\label{subsec:examples}

In this section we present two univariate examples of level dependent parameter
schemes, whose constructions are based on the four point and six point Dubuc-Deslauriers schemes.
In particular, in Example \ref{ex:4_point},
the non-stationary scheme is constructed in such a way that the support of its limit function
$$
 \phi_1=\lim_{r \rightarrow \infty} S_{{\mathbf a}(\omega^{(r)})} \ldots  S_{{\mathbf a}(\omega^{(1)})} \delta, \quad
\delta(\alpha)=\left\{ \begin{array}{cc} 1, & \alpha=0, \\ 0, & \hbox{otherwise} \end{array}\right., \quad
\alpha \in \ZZ^s,
$$
is smaller than the support of the four point scheme, but its regularity is comparable.
In Example \ref{ex:4_point_6_point}, every non-stationary mask is a convex combination of the four point and six point Dubuc-Deslauriers schemes. We show how the regularity of
the corresponding non-stationary scheme depends on the range of the corresponding parameters $\{\omega^{(r)}, \ r \in \NN\}$.
Both examples illustrate the importance of the
dependency on several parameters $\{\omega^{(r)}, \ r \in \NN\}$ instead of one $\omega \in \RR$.

\begin{example} \label{ex:4_point}
We consider the univariate, binary scheme with the symbols
\begin{eqnarray*}
a(z,\omega^{(r)})&=&z^{-1}\frac{(1+z)^2}{2}, \quad \quad r \in \{1,2\}, \\
a(z,\omega^{(r)})&=&z^{-1}\frac{(1+z)^2}{2}+\omega^{(r)}(-z^{-3}+z^{-1}+z-z^{3}),\quad r\ge 3,
\quad z \in \CC \setminus \{0\},
\end{eqnarray*}
where $\omega^{(r)}$ are chosen at random from the interval
$[\frac{3}{64},\frac{1}{16}]$. The corresponding family
$$
 {\cal T}_{0,\frac{1}{16}}=\{ A_{\varepsilon,0}|_{V_1},  A_{\varepsilon,\frac{1}{16}}|_{V_1},
\ \varepsilon \in \{0,1\} \}
$$
consists of the same four matrices as in \eqref{def:Matrices_4_point}.
And at the first glance the H\"older exponent of this scheme  is $\alpha \ge 1$.
On the other hand, we can view this scheme as the one with the corresponding matrix
family
$$
 {\cal T}_{\frac{3}{64},\frac{1}{16}}=\{ A_{\varepsilon,\frac{3}{64}}|_{V_1},  A_{\varepsilon,\frac{1}{16}}|_{V_1}, \
   \varepsilon \in \{0,1\} \},
$$
applied to a different starting data. Then we get $\rho({\cal T}_{\frac{3}{64},\frac{1}{16}})=3/8$ and,
by  Theorem \ref{teo:JSRregularity_r}, the H\"older exponent is actually $\alpha \ge
- \hbox{log}_2\frac{3}{8} \approx 1.4150$.

The size of the support of $\phi_1$ can be determined using the technique from  \cite{CohenDyn}
and is given by
$$
 \left[\sum_{k=0}^\infty 2^{-k-1} \ell(k), \sum_{k=0}^\infty 2^{-k-1} r(k) \right]=\left[-\frac{3}{2}, \frac{3}{2} \right]
$$
with
\begin{eqnarray*}
 \ell(k)&=&-1, \quad r(k)=1, \quad k=0,1, \\
 \ell(k)&=&-3, \quad r(k)=3, \quad k \ge 2.
\end{eqnarray*}
Recall that the support of the basic limit function of the four
point scheme is $[-3,3]$.

\end{example}

\medskip
\begin{example} \label{ex:4_point_6_point} In this example we consider the univariate non-stationary
scheme with symbols
$$
 a(z,\omega^{(r)})=\omega^{(r)} a(z) +(1-\omega^{(r)}) b(z), \quad
 \omega^{(r)} \in [0,1], \quad z \in \CC \setminus \{0\},
$$
where
$$
 a(z)=-\frac{z^{-3}(z+1)^4}{16}\left(z^2-4z+1\right)
$$
is the symbol of the four point scheme and
$$
 b(z)=\frac{z^{-5}(z+1)^6}{256}\left(3z^4-18z^3+38z^2-18z+3\right)
$$
is the symbol of $C^2-$convergent quintic Dubuc-Deslauriers subdivision
scheme \cite{DesDubuc}. By \cite{Floater}, the H\"older exponent of the $S_{\bf b}$ is $\alpha\approx 2.8301$. To determine the regularity of this
level and parameter dependent scheme we consider the matrix set
$$
  {\cal T}_{0,1}=\{ A_{\varepsilon,0}|_{V_2},  A_{\varepsilon,1}|_{V_2},
\ \varepsilon \in \{0,1\} \}
$$
with the four matrices
\begin{eqnarray*} \small
 A_{0,\omega}|_{V_2}&=&\frac{1}{256} \left(\begin{array}{rrrrrrr}
 3-3\omega& 0&   0& 0&   0&   0&    0\\
 -7-9\omega& -9+9\omega&  3-3\omega& 0& 0& 0& 0\\
 45+3\omega& 45+3\omega& -7-9\omega& -9+9\omega& 3-3\omega& 0& 0\\
 -9+9\omega& -7-9\omega& 45+3\omega& 45+3\omega& -7-9\omega& -9+9\omega& 3-3\omega\\
0& 3-3\omega& -9+9\omega& -7-9\omega& 45+3\omega& 45+3\omega& -7-9\omega\\
0& 0& 0&  3-3\omega& -9+9\omega& -7-9\omega& 45+3\omega\\
0& 0& 0& 0& 0& 3-3\omega&-9+9\omega
\end{array} \right), \\
 A_{1,\omega}|_{V_2}&=& \frac{1}{256} \left(\begin{array}{rrrrrrr}
  -9+9\omega&  3-3\omega&   0&            0& 0& 0& 0\\
 45+3\omega& -7-9\omega& -9+9\omega& 3-3\omega0& 0& 0\\
-7-9\omega& 45+3\omega& 45+3\omega& -7-9\omega& -9+9\omega &3-3\omega& 0\\
 3-3\omega& -9+9\omega& -7-9\omega& 45+3\omega& 45+3\omega& -7-9\omega&-9+9\omega\\
0& 0& 3-3\omega& -9+9\omega& -7-9\omega& 45+3\omega& 45+3\omega\\
0& 0& 0& 0& 3-3\omega& -9+9\omega& -7-9\omega\\
  0& 0& 0& 0& 0& 0&3-3\omega \end{array} \right)
\end{eqnarray*}
for $\omega \in \{0,1\}$. In this case, the regularity of the non-stationary  scheme $\{S_{{\bf a}(\omega^{(r)})}, \ r \in \NN\}$ coincides
with the regularity of the four point scheme. For $\omega \in \{a,1\}$, $a>0$, the scheme $\{S_{{\bf a}(\omega^{(r)})}, \ r \in \NN\}$
is $C^2-$convergent. And, for $\omega \in \{0,a\}$, $a <1$, extensive numerical experiments show that
the JSR of the family ${\cal T}_{0,a}$ is determined by the the subfamily $\{ A_{\varepsilon,a}|_{V_2}, \
\varepsilon \in \{0,1\} \}$. For example, for $a=\frac{1}{2}$, we obtain
$\rho( {\cal T}_{0,\frac{1}{2}}) \approx 0.2078$ and, thus, the
corresponding H\"older exponent is $\alpha \ge 2.2662$.
\normalsize
\end{example}

\section{Limitations of generation properties of non-stationary schemes}\label{sec:limitations}

It is known that certain level dependent (non-stationary) subdivision schemes are capable of
generating/reproducing certain spaces of exponential polynomials, see e.g.
\cite{CharinaContiRomani13, ContiRomani11}. In this section, we are interested in answering
the question: How big is the class of functions that can be generated/reproduced by such schemes?

\smallskip \noindent
More precisely, we show that, already in the univariate setting,
the zero sets of the Fourier transforms of the limit functions
$$
 \phi_k=\lim_{r \rightarrow \infty} S_{{\mathbf a}^{(r)}} \ldots  S_{{\mathbf a}^{(k)}} \delta, \quad
\delta(\alpha)=\left\{ \begin{array}{cc} 1, & \alpha=0, \\ 0, & \hbox{otherwise} \end{array}\right., \quad
\alpha \in \ZZ^s,
$$
of such schemes are unions of the sets
$$
 \Gamma_r=\{\omega \in \CC \ : \ a^{(r)}(e^{-i 2 \pi M^{-r}\omega})=0\}, \quad r \ge k,
$$
and that the sets $\Gamma_r$ are such that $\Gamma_r+ M^r \ZZ =\Gamma_r$.
Thus, some elementary functions cannot be generated by non-stationary schemes,
see Example \ref{ex:bad}.
Also the requirement that
$$
 \hat{\phi}_k(\omega)=\int_{\RR} \phi_k(x) e^{-i 2 \pi  x \omega} dx, \quad \omega \in \CC, \quad k \in \NN,
$$
is an entire function, limits the properties of the functions that can be generated by non-stationary subdivision schemes.

\begin{proposition}  \label{prop:limitations}
Let   $\{ \phi_k,\ k\in \NN\}$  be continuous functions of compact support  satisfying
$$
 \phi_k(x)=\sum_{\alpha \in \ZZ} \ra^{(k)}(\alpha) \phi_{k+1}(Mx-\alpha), \quad k \in
\NN, \quad x \in \RR.
$$
Then
$$
 \{\omega \in \CC \ : \hat{\phi_k}(\omega)=0\}=\bigcup_{r \ge k} \Gamma_r,
$$
such that the sets $\Gamma_r$  satisfy
$$
 \Gamma_r+  M^r \ZZ =\Gamma_r.
 $$
\end{proposition}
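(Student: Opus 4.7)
The plan is to exploit the fact that the refinement relation Fourier-transforms into a multiplicative factorization of $\hat{\phi}_k$. In the univariate case ($s=1$), substituting $y = Mx - \alpha$ under the integral yields
\[
\hat{\phi}_k(\omega) \;=\; \frac{1}{|M|}\, a^{(k)}\!\bigl(e^{-i 2\pi \omega/M}\bigr)\, \hat{\phi}_{k+1}(\omega/M), \qquad \omega \in \CC,
\]
an identity of entire functions (Paley--Wiener, since each $\phi_k$ is continuous and compactly supported). Iterating this identity from level $k$ up through level $n$ produces
\[
\hat{\phi}_k(\omega) \;=\; |M|^{k-n-1}\prod_{r=k}^{n} a^{(r)}\!\bigl(e^{-i 2\pi \omega/M^{r-k+1}}\bigr)\,\hat{\phi}_{n+1}\!\bigl(\omega/M^{n-k+1}\bigr),
\]
and the change of variable $\tilde{\omega} = M^{k-1}\omega$ brings each factor into the normalized form $a^{(r)}(e^{-i 2\pi M^{-r}\tilde{\omega}})$ appearing in the definition of $\Gamma_r$.

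Any $\omega$ lying in some $\Gamma_r$, $r \ge k$, is then automatically a zero of $\hat{\phi}_k$, giving the inclusion $\bigcup_{r\ge k}\Gamma_r \subseteq \{\omega \in \CC : \hat{\phi}_k(\omega)=0\}$. For the reverse inclusion, fix a zero $\omega_0$ of $\hat{\phi}_k$ not belonging to any $\Gamma_r$, $r \ge k$; then every mask factor in the iterated identity is nonzero, so the identity forces $\hat{\phi}_{n+1}\!\bigl(\omega_0/M^{n-k+1}\bigr) = 0$ for every $n \ge k$. As $n \to \infty$, the argument $\omega_0/M^{n-k+1}$ tends to $0$, while $\hat{\phi}_{n+1}(0) = \int_{\RR} \phi_{n+1}(x)\,dx$ remains bounded away from $0$ thanks to the sum rule $a^{(r)}(1) = |m|$ combined with the $C^0$-convergence of the scheme. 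Continuity of each $\hat{\phi}_{n+1}$ near the origin contradicts the vanishing $\hat{\phi}_{n+1}\!\bigl(\omega_0/M^{n-k+1}\bigr) = 0$ for large $n$, establishing the reverse inclusion.

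The periodicity $\Gamma_r + M^r\ZZ = \Gamma_r$ is immediate from the identity $e^{-i 2\pi M^{-r}(\omega + M^r p)} = e^{-i 2\pi M^{-r}\omega}$ for $p \in \ZZ$: the map $\omega \mapsto a^{(r)}(e^{-i 2\pi M^{-r}\omega})$ has period $M^r$, so its zero set is invariant under the additive group $M^r\ZZ$. The main obstacle is the reverse inclusion: one has to exclude the possibility that $\hat{\phi}_{n+1}$ has an accidental sequence of zeros accumulating at $0$, which requires both a uniform lower bound on $\int\phi_{n+1}$ and a uniform modulus-of-continuity estimate near the origin. Both should follow from the uniformly bounded supports of $\phi_{n+1}$ (inherited from the common support $\{0,\dots,N\}$ of the masks) together with the sum-rule normalization.
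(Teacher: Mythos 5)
Your argument follows the same core route as the paper's: Fourier-transforming the refinement relation into the product identity $\hat{\phi}_k(\omega)=|M|^{-1}a^{(k)}(e^{-i2\pi\omega/M})\,\hat{\phi}_{k+1}(\omega/M)$ and reading the zero set of $\hat{\phi}_k$ off the factors, while the periodicity $\Gamma_r+M^r\ZZ=\Gamma_r$ comes from the $M^r$-periodicity of $\omega\mapsto e^{-i2\pi M^{-r}\omega}$ --- your version of that last step is in fact cleaner than the paper's detour through the complex logarithm and the Weierstrass factorization. Where you genuinely diverge is the reverse inclusion. The paper dispatches it in one sentence by appealing to ``the definition of $\hat{\phi}_k$ as an infinite product,'' i.e.\ to the standard fact that a locally uniformly convergent, not identically vanishing infinite product of entire functions vanishes exactly where some factor does; you instead keep the finite factorization with the explicit remainder $\hat{\phi}_{n+1}(\omega_0/M^{n-k+1})$ and rule out its vanishing for large $n$. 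That is more explicit and more honest, and you also correctly observe the normalization the paper glosses over (for $k>1$ the factors are $a^{(r)}(e^{-i2\pi M^{k-1-r}\omega})$, so the identity with the stated $\Gamma_r$ holds only after the rescaling $\tilde{\omega}=M^{k-1}\omega$).

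However, the step you yourself flag as the main obstacle is not closed by what you invoke. Uniformly bounded supports alone give neither the lower bound $|\hat{\phi}_{n+1}(0)|\ge c>0$ nor a modulus of continuity uniform in $n$. For the first you need the sum rule $a^{(r)}(1,\dots,1)=|m|$, which in the univariate case $|M|=|m|$ yields $\int_\RR\phi_{n+1}=\int_\RR\phi_1$ for all $n$, \emph{plus} the fact that $\int_\RR\phi_1\neq 0$; the latter follows from $C^0$-convergence (partition of unity), not from the bare hypotheses of the proposition, under which $\int_\RR\phi_1$ could vanish and the statement itself degenerates. For the second you need $\sup_n\|\phi_{n+1}\|_\infty<\infty$ (equivalently a uniform $L^1$ bound), which is a stability property of the tail limit functions of a convergent non-stationary scheme, again not a consequence of compact support alone. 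With these two inputs --- both available under the paper's standing assumptions --- your argument closes; note that the paper's one-line appeal to the infinite product representation hides exactly the same dependence on control of the tails.
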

\begin{proof} Let $k \in \NN$. By Paley-Wiener theorem, the Fourier transform $\hat{\phi}_k$ defined
on $\RR$ has an analytic extension
$$
 \hat{\phi}_k(\omega)=\int_{\RR} \phi_k(x) e^{-i 2 \pi  x \omega} dx, \quad \omega \in \CC,
$$
to the whole complex plane $\CC$ and $ \hat{\phi}_k$ is an entire function. By
Weierstrass theorem \cite{Conway}, every entire function can be represented by a
product involving its zeroes. Define the sets
$$
 \Gamma_r:=\{\omega \in \CC \ : \ a^{(r)}(e^{-i 2 \pi M^{-r}\omega})=0\}, \quad r \in \NN.
$$
Let $z_{r,1}, \ldots, z_{r,N}$ be the zeros of the polynomials
$a^{(r)}(e^{-i 2 \pi M^{-r}\omega})$, counting their
multiplicities. Then
$$
 \Gamma_r= i M^r \bigcup_{\ell=1}^N \hbox{Ln}(z_{r,\ell}),
$$
where, by the properties of the complex logarithm, each of the sets $i M^r
\hbox{Ln}(z_{r,\ell})$  consists of sequences  of complex numbers
and is $ M^r -$periodic. Thus, each of the sets $\Gamma_r$ satisfy
$$
 \Gamma_r+M^r \ZZ=\Gamma_r, \quad r \in \NN.
$$
The definition of $\hat{\phi}_k$  as an infinite product of the
trigonometric polynomials $a^{(r)}(e^{-i 2 \pi M^{-r}\omega})$, $r
\ge k$, yields the claim.

\end{proof}
\smallskip

The following  examples  illustrate the result of Proposition \ref{prop:limitations}.

\begin{example}
The basic limit function of the simplest stationary scheme is given by
$\phi_1=\chi_{[0,1)}$. Its Fourier transform
is
$$
 \hat{\phi}_1(\omega)=\frac{1-e^{-i2\pi \omega}}{i 2 \pi  \omega}, \quad \hbox{and} \quad \{\omega
\in \CC \ : \ \hat{\phi}_1(\omega)=0\}=  \ZZ \setminus \{0\}.
$$
The mask symbol $a(z)=1+z$ has a single zero at $z=-1$, i.e.
$e^{-i2 \pi 2^{-r}\omega}=-1$ for $\omega = 2^r \{ \frac{1}{2}+  k
\: \ k\in \ZZ  \}$, $r \in \NN_0$. In other words, $\Gamma_1=\{1+
2 k \ : \ k\in \ZZ  \}$ and $\Gamma_r=2 \Gamma_{r-1}$ for  $r \ge
2$. Therefore,
$$
  \{\omega \in \CC \ : \ \hat{\phi}_1(\omega)=0\}=\bigcup_{r \in \NN} \Gamma_r.
$$
\end{example}

\begin{example}  The first basic limit function of the simplest non-stationary scheme is
given by $\phi_1(x)=\chi_{[0,1)}(x) e^{\lambda x}$,
$\lambda \in \CC$. Its Fourier transform
is
$$
 \hat{\phi}_1(\omega)=\frac{e^{-i 2\pi \omega+\lambda}-1}{-i2 \pi \omega+\lambda}, \quad \omega
\in \CC, \quad \hbox{and} \quad \{\omega \in \CC \ : \ \hat{\phi}_1(\omega)=0\}=-\frac{i
\lambda}{2\pi} +\ZZ \setminus\{0\} .
$$
The mask symbol $a^{(r)}(z)=1+e^{\lambda 2^{-r}}z$ has a single
zero at $z=-e^{-\lambda 2^{-r}}$, i.e. $e^{-i2 \pi
2^{-r}\omega}=-e^{-\lambda 2^{-r}}$ for $\omega =
 -\frac{i \lambda}{2\pi}+ 2^r \{ \frac{1}{2}+ k \ : \ k\in \ZZ  \}$, $r \in \NN$.
Note that $\Gamma_1= -\frac{i \lambda}{2\pi}+\{1+2 k \ : \ k\in \ZZ  \}$ and
$$
 \bigcup_{r \in \NN} 2^r\{\frac{1}{2}+ k \ : \ k\in \ZZ  \}=\ZZ \setminus\{0\}.
$$
Therefore,
$$
  \{\omega \in \CC \ : \ \hat{\phi}_1(\omega)=0\}=\bigcup_{r \in \NN} \Gamma_r.
$$
\end{example}

In the next example we identify a compactly supported function
that cannot be generated by any non-stationary subdivision scheme.

\begin{example} \label{ex:bad}
Let us consider the compactly supported function
$$
 f(x)=\chi_{[-1,1]}(x) \frac{2}{\sqrt{1-x^2}}, \quad x \in \RR.
$$
It cannot be a limit of any non-stationary  subdivision scheme. Indeed, its Fourier transform
\begin{equation} \label{eq:FT_alternative}
  J_0(\omega)=\int_{\RR} f(x) e^{-i  x \omega} dx, \quad \omega \in \CC,
\end{equation}
is the Bessel function $J_0$ of the first kind, which is entire, but has only positive
zeros.
The lower bound for its zeros $j_{0,s}$, $s \in \NN$, is given by
$j_{0,s} > \sqrt{(s-\frac{1}{4})^2 \pi ^2}$, see \cite{McCann}. Thus, Proposition
\ref{prop:limitations}
implies the claim.
\end{example}

{\bf Acknowledgements:} Vladimir Protasov was sponsored by RFBR grants $13-01-00642$, $14-01-00332$
and by the grant of Dynasty foundation.


\end{document}